 \title[Corona Theorem with Estimates]{Estimates for the Corona Theorem  on
$H^{\infty}_{\mathbb{I}}(\D)$}
\author{Debendra P.  Banjade}
\address{Department of Mathematics and Statistics\\
         Coastal Carolina University \\
         P.O. Box 261954\\
Conway, SC 29528-6054 \\
         (843) 349-6569}
\email{dpbandjade@coastal.edu}
\subjclass[2010]{Primary: 30H50; Secondary: 30H80, 46J20}
\keywords{corona theorem, Wolff's theorem, $H^{\infty}(\mathbb{D})$, ideals}
 \newtheorem{thm}{Theorem}[section]
\newtheorem*{thm*}{Theorem}  
\newtheorem{lemma}{Lemma}[section]
\newtheorem*{Uchiyama}{Corona Theorem}
\newtheorem*{wolffthm}{Wolff's Theorem}
\newtheorem*{tr}{ Ideal Theorem (Treil)}
\newtheorem*{cr1}{ Corollary 1}
\newcommand{\ran}{\operatorname{ran}}    \newcommand{\C}{\mathbb{C}}
\newcommand{\I}{\mathcal{I}} \newcommand{\D}{\mathbb{D}}
\begin{document}

\maketitle

\begin{abstract} Let $\mathbb{I}$ be a proper ideal of $H^{\infty}(\D)$. We prove the corona theorem for infinitely many generators in the algebra $H^{\infty}_{\mathbb{I}}(\D)$. This extends the finite corona results of Mortini, Sasane, and Wick  \cite{MSW}.  We also provide the estimates for corona solutions.  Moreover, we prove a generalized Wolff's Ideal Theorem  for this sub-algebra.  \end{abstract}

\section{Introduction}

%Carleson's celebrated proof of the Corona Theorem \cite{carleson}, which gives necessary and sufficient conditions for unit membership in the ideal of $ H^{\infty}(\D) $ generated by a given set of functions, opened the door for several new questions. The first we will consider, which we call a ``generalized ideal problem,'' asks whether we can find  weaker conditions under which a given function $ h $ is included in the ideal. If not, can we at least find some $ p > 1 $ so that $ h^p $ belongs to the ideal? Also, are there other algebras for which a result similar to Carleson's holds?

%The first two questions were proposed and answered (at least in part) by  Wolff
%\cite{wolff} in a result we refer to as \lq\lq Wolff's Theorem.'' The third has been a topic of research
%over the years, with varying results. (For examples of algebras on which a corona theorem holds, see
%Tolokonnikov \cite{tolokonnikov} and Nikolski \cite{nikolski}(Appendix 3, P. 288) as well as Costea-Sawyer-Wick
%\cite{druryarveson}; for some negative examples, see Scheinberg \cite{scheinberg} and Trent
%\cite{anote}).
 
Let $\mathbb{D}:=\{ z \in \mathbb{C}: |z|<1\}$ be an open unit disk in the complex plane $\C$ and $H^{\infty}(\D)$ be the set of all bounded analytic functions with the norm $ \Vert f \Vert_ {\infty} =\underset{z \in \D} \sup \vert f(z) \vert <\infty$.  In 1962, Carleson proved his famous corona theorem which states that the ideal, $ \I, $ generated by a finite set of functions $
\lbrace f_i \rbrace_{i=1}^{n} \subset H^{\infty}(\D) $ is the entire space $ H^{\infty}(\D),$  if for some $\epsilon > 0,$  $   \sum_{i=1}^{n} |f_i(z)|^2  \geq  \epsilon  \text{ for all }   z \in \D.$ %He also gave a bound for the size of the solutions.
  In 1979, Wolff gave a simplified proof of Carleson's corona theorem, which can be found in \cite{garnett}, that made use of $H^2$-Carleson's measures and Littlewood-Paley expressions. Both Carleson and Wolff provided the bounds for corona solutions depending on the number of functions $n$.  Later, Rosenblum \cite{rosenblum}, Tolokonnokov \cite{tolokonnikov} , and Uchiyama \cite{u}, independently, extended the corona theorem for infinitely many functions, where as the best estimate for the corona solution was due to Uchiyama as follows: 
\begin{Uchiyama}
Let  $ \lbrace f_i \rbrace_{i=1}^{\infty} \subset  H^{\infty}(\D)$, with
 $$0< \epsilon ^2 \leq \sum_{i=1}^{\infty} |f_i(z)|^2  \leq 1 \text{ for all } \; \;  z \in \D.$$
 Then there exist  $ \lbrace g_i \rbrace_{i=1}^{\infty} \subset  H^{\infty}(\D)$ such that 
 $$\sum_{i=1}^{\infty}  {f_i(z) g_i(z)}=1\; \text{ for all } \; \;  z \in \D$$ and 
 $$ \underset{z \in \D}{\sup} \{ \sum_{i=1}^{\infty} |g_i(z)|^2 \}  \leq \frac{ 9} {\epsilon^2}  \ln \frac{1}{\epsilon ^2} , \;  \text { for} \;  \epsilon ^2 < \frac{1}{e}. $$
 \end{Uchiyama}

The main purpose of this paper is to extend the corona theorem for infinitely many functions in  $ H^{\infty}_{\mathbb{I}}(\mathbb{D}).$  Moreover, we provide the estimates for the corona solutions.  This will completely settle the conjecture of  Ryle \cite{ryle1}.\\
 
The algebra, $ H^{\infty}_{\mathbb{I}}(\mathbb{D}),$ of our  interest is defined as follows:\\
Let $\mathbb{I}$ be any proper closed ideal in $ H^{\infty}(\D)$, and define
$$H^{\infty}_{\mathbb{I}}(\mathbb{D}):=\{ c+ \phi \; \vert \;  c\in \mathbb{C} \; \text{and} \; \phi \in \mathbb{I}\}. $$
Then  $H^{\infty}_{\mathbb{I}}(\mathbb{D})$ is a sub-algebra of $ H^{\infty}(\D)$. We regard $ \left ( H^{\infty}_{\mathbb{I}}(\mathbb{D})\right)_{l{^2}}$ as a sub-algebra of $ H_{l^2}^{\infty}(\mathbb{D})$, where $H_{l^2}^{\infty}(\mathbb{D})$ is a sequence of bounded analytic functions. Also, for $F = (f_1, f_2, . . . ), f_j \in  H^{\infty}(\mathbb{D}), $ we use the norm $$\Vert F \Vert _{\infty}  = \underset {z\in \D} \sup \left( \sum_{i=1}^{\infty} |f_i(z)|^2 \right)^{1/2}.$$

In  \cite{MSW},  Mortini, Sasane, and Wick proved the corona theorem for finitely many generators in $H^{\infty}_{\mathbb{I}}(\mathbb{D})$. In fact, \cite{MSW}  provided the estimates  on the solutions $g_j$ in terms of the parameters $\epsilon$ and n (the number of functions $f_j$). In this paper, we  prove an analogous result of Uchiyama  for the sub-algebra  $H^{\infty}_{\mathbb{I}}(\mathbb{D})$ by removing the dependency of estimates  on n.  \\

%Also, in Theorem 2, we prove the ideal theorem in this algebra using the Treil's theorem. \\

Let $ f \in H^{\infty}_{\mathbb{I}}(\mathbb{D})$, say $f(z)=c+ \phi (z)$, for $\phi \in \mathbb{I}$ and $c \in \C. $ For simplicity, we use the notation: 
$f(z)=f_c+\phi_f(z) ,$ where $f_c  \in \mathbb{C}$ and $\phi_f \in \mathbb{I}.$  Similarly, let $F = (f_1, f_2, . . . ), f_j \in  H^{\infty}_{\mathbb{I}}(\mathbb{D})$. Then for $z \in \D,$ we write $F(z)=F_c+\phi_F(z).$\\

%$ Then if $\alpha$ is a zero of $\phi(z)$, we have 
%$$ \vert f_c \vert =\vert f(\alpha)\vert \leq \Vert f \Vert_{\infty}, \; \text{and}$$
%$$ \Vert \phi_f \Vert_{\infty} \leq \vert f_c\vert +\Vert f \Vert_{\infty} \leq 2 \Vert f \Vert_{\infty}.$$

%Let $F = (f_1, f_2, . . . ), f_j \in  H^{\infty}_{\mathbb{I}}(\mathbb{D})$. Then for $z \in \D,$ we write $F(z)=F_c+\phi_F$. The above norm inequality implies that 
% $$ \Vert F_c \Vert_{l^2} \leq \Vert F \Vert_{\infty} \; \text{and}$$
%$$ \Vert \phi_F \Vert_{\infty}  \leq 2 \Vert F \Vert_{\infty}.$$

We are now ready to state our Main Theorem, which extends to the corona theorem for infinitely many functions in $H^{\infty}_{\mathbb{I}}(\mathbb{D}).$
\begin{thm} \label{th1}
Let $ F(z) = (f_1(z), f_2(z), \dots),  \; f_j \in H^{\infty}_{\mathbb{I}}(\mathbb{D})$ and $$ 0< \epsilon ^2 \leq  F(z)F(z)^{\ast}\;  \leq 1 \; \text{ for all } \; z\in \D .$$ Then there exists $ U = (u_1(z), u_2(z), \dots ), \; u_{j} \in H^{\infty}_{\mathbb{I}}(\mathbb{D})$ such that $$ (a) \; F(z)U(z)^T = 1 \; \text{ for all}\;  z \in \D \; \; \text{and} $$
$$ (b) \; \Vert U \Vert _{\infty} \leq   \left( { 1+ \frac{1}{\Vert F_c \Vert }} \right) \frac{9}{\epsilon ^2} \ln \left( \frac{1}{\epsilon ^2}\right). $$ 
%where $C_0$ is a universal constant. 

\end{thm}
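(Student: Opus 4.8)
My plan is to reduce the problem to the scalar-coefficient corona theorem of Uchiyama (stated above) and then to \emph{correct} the resulting solution so that its entries fall back inside the subalgebra $H^{\infty}_{\mathbb{I}}(\D)$, while keeping the estimate intact. Throughout I write $F = F_c + \phi_F$ with $F_c = ((f_1)_c,(f_2)_c,\dots)$ a constant vector and $\phi_F = (\phi_{f_1},\phi_{f_2},\dots)$, each $\phi_{f_j}\in\mathbb{I}$. I would first record that necessarily $0<\Vert F_c\Vert\leq 1$: it is nonzero since otherwise every $f_j$ lies in $\mathbb{I}$, forcing each partial sum of $FU^{T}$ into the proper closed ideal $\mathbb{I}$ so that $FU^{T}\neq 1$ for any admissible $U$ (this is exactly why the harmless factor $1/\Vert F_c\Vert$ can appear in (b)); and it is at most $1$ because $f\mapsto f_c$ is a character of $H^{\infty}_{\mathbb{I}}(\D)$ vanishing on $\mathbb{I}$, hence the restriction of a point of the maximal ideal space of $H^{\infty}(\D)$ at which $\sum_j|\widehat{f_j}|^{2}\leq\Vert F\Vert_{\infty}^{2}\leq 1$. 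As in Uchiyama's theorem I take $\epsilon^{2}<1/e$.

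Next I would build the solution in two pieces. Applying the Corona Theorem (Uchiyama) to $\{f_j\}\subset H^{\infty}(\D)$ --- legitimate since $0<\epsilon^{2}\leq FF^{\ast}\leq 1$ --- yields $G=(g_1,g_2,\dots)$ with $g_j\in H^{\infty}(\D)$, $\sum_j f_j g_j=1$ on $\D$, and $\Vert G\Vert_{\infty}^{2}\leq M$, where $M:=\frac{9}{\epsilon^{2}}\ln\frac{1}{\epsilon^{2}}$. Since these $g_j$ need not lie in $H^{\infty}_{\mathbb{I}}(\D)$, I would introduce the constant vector $b=(b_j)$ with $b_j:=\overline{(f_j)_c}/\Vert F_c\Vert^{2}\in\C$, so that $b\in\ell^{2}$, $\Vert b\Vert=1/\Vert F_c\Vert$, and $\sum_j(f_j)_c\,b_j=1$; hence
$$\chi\ :=\ \sum_j b_j f_j-1\ =\ \sum_j b_j\phi_{f_j}.$$
Every partial sum of the last series lies in $\mathbb{I}$, and by Cauchy--Schwarz its tails have sup-norm at most $\big(\sum_{j\geq N}|b_j|^{2}\big)^{1/2}\sup_{z\in\D}\Vert\phi_F(z)\Vert\to 0$; since $\mathbb{I}$ is closed, $\chi\in\mathbb{I}$, and moreover $|\chi(z)|\leq\Vert F(z)\Vert\,\Vert b\Vert+1\leq 1+1/\Vert F_c\Vert$. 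I would then set $u_j:=b_j-\chi\,g_j$ and $U=(u_1,u_2,\dots)$. Because $\mathbb{I}$ is an ideal of $H^{\infty}(\D)$, $\chi g_j\in\mathbb{I}$, so $u_j\in\C+\mathbb{I}=H^{\infty}_{\mathbb{I}}(\D)$; and since $FU^{T}$ converges absolutely ($\sum_j|f_j(z)u_j(z)|\leq\Vert F(z)\Vert\,\Vert U(z)\Vert$),
$$F(z)U(z)^{T}=\sum_j b_j f_j(z)-\chi(z)\sum_j f_j(z)g_j(z)=(1+\chi(z))-\chi(z)=1,$$
which is (a).

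For the estimate (b), I would use the triangle inequality in $\ell^{2}$: for each $z\in\D$,
$$\Vert U(z)\Vert\ \leq\ \Vert b\Vert+|\chi(z)|\,\Vert G(z)\Vert\ \leq\ \frac{1}{\Vert F_c\Vert}+\Big(1+\frac{1}{\Vert F_c\Vert}\Big)\sqrt{M}\ =\ \sqrt{M}+\frac{1+\sqrt{M}}{\Vert F_c\Vert},$$
and then observe that $\epsilon^{2}<1/e$ forces $M>9$, so $\sqrt{M}\leq M$ and $1+\sqrt{M}\leq M$, giving $\Vert U\Vert_{\infty}\leq M+M/\Vert F_c\Vert=\big(1+1/\Vert F_c\Vert\big)\frac{9}{\epsilon^{2}}\ln\frac{1}{\epsilon^{2}}$. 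The main obstacle I expect is the second/third step: keeping the correction inside the subalgebra without inflating the solution. One must use the ideal property of $\mathbb{I}$ (to get $\chi g_j\in\mathbb{I}$) together with its norm-closedness (to get $\chi$ itself in $\mathbb{I}$ from the series), and --- crucially for the estimate --- one must control the correction term $-\chi g_j$ through the bound $\Vert\chi\Vert_{\infty}\leq 1+1/\Vert F_c\Vert$; the reduction to Uchiyama and the final arithmetic with $M$ are then routine.
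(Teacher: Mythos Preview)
Your construction is correct and in fact produces \emph{exactly} the same solution as the paper: once one unravels the paper's formula
\[
U(z)^{T}\;=\;G(z)^{T}+Q_{F(z)}Q_{G(z)}^{T}\frac{F_c^{\ast}}{\Vert F_c\Vert^{2}}\;=\;\frac{F_c^{\ast}}{\Vert F_c\Vert^{2}}-G(z)^{T}\,\phi_F(z)\,\frac{F_c^{\ast}}{\Vert F_c\Vert^{2}},
\]
the scalar $\phi_F(z)\,F_c^{\ast}/\Vert F_c\Vert^{2}$ is precisely your $\chi(z)$ and the column $F_c^{\ast}/\Vert F_c\Vert^{2}$ is your $b$, so componentwise $u_j=b_j-\chi g_j$. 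The difference is only in the derivation. The paper invokes the $Q$-operator identity $(FG^{T})I_{l^2}=G^{T}F+Q_{F}Q_{G}^{T}$ of Lemma~2.1 to manufacture a correction lying in $\ker F(z)$ and then reads off $H^{\infty}_{\mathbb{I}}$-membership from the second expression above; you bypass the $Q$-machinery entirely, write the corrected solution down directly, and verify membership via the uniform convergence of $\sum_j b_j\phi_{f_j}$ in the closed ideal $\mathbb{I}$. Your route is shorter and more elementary; the paper's route is more systematic and explains structurally why such a correction exists (and recycles the same mechanism for Theorem~1.2). The estimate derivations also differ --- the paper uses $\Vert Q_{A}\Vert\leq\Vert A\Vert_{l^2}$ on the first form to get $\Vert U\Vert_{\infty}\leq(1+1/\Vert F_c\Vert)\Vert G\Vert_{\infty}$, while you bound $\vert\chi\vert$ on the second and then use $\sqrt{M},\,1+\sqrt{M}\leq M$ --- but both reach the stated bound.

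There is one gap you should repair. Your justification that $\Vert F_c\Vert>0$ is a non sequitur: you argue that if $F_c=0$ then no $U\in(H^{\infty}_{\mathbb{I}}(\D))_{l^2}$ can satisfy $FU^{T}=1$, but that is a statement about the \emph{conclusion}, not a consequence of the \emph{hypothesis} $FF^{\ast}\geq\epsilon^{2}>0$; and even that statement is not immediate, since your partial sums lie in $\mathbb{I}$ but only \emph{pointwise} limits are at issue. What is actually needed is that the hypothesis itself forces $\Vert F_c\Vert\geq\epsilon$. The paper supplies this as Lemma~2.2: since $\{\phi_{f_j}\}\subset\mathbb{I}$ with $\mathbb{I}$ a proper closed ideal, the infinite corona theorem gives $\inf_{z\in\D}\sum_j\vert\phi_{f_j}(z)\vert^{2}=0$; choosing $z_k$ with $\Vert\phi_F(z_k)\Vert\to 0$ in the inequality $\epsilon\leq\Vert F(z_k)\Vert\leq\Vert F_c\Vert+\Vert\phi_F(z_k)\Vert$ yields $\Vert F_c\Vert\geq\epsilon$. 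With this inserted in place of your first-paragraph argument for $\Vert F_c\Vert>0$, the proof is complete.
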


%%%%%%%%%%%%%%%%%%%%%%%%%%%%%%%%%%%%%%%%%%%%%%%%%%%%%%%%%%%%%%%%%%%%%%%%%%%%%%%%%%%%%%%%%%%%%%%%%%%%%%%%%%%%%%%%%%%%%%%%%%%%%%%%%%%%%%%%%%%%%%%%%%%%%%%%%%%%%%%%%%%%%%%%%%%%%%%%%%%%%%%%%%%%%%%%%%%%%%%%%%%%%%%%%%%%%%%%%%%%%%%%%%%%%%%%%%%%%%%%%%
In order to generalize the corona theorem, it is natural to ask if the corona theorem still holds true if we replace the lower bound, $\epsilon,$  in the corona condition  by any $H^{\infty}(\D)$ functions. Namely, let $h, f_1, f_2,...,f_n \in H^{\infty}(\D)$ such that 
\begin{equation} \label{data}
\vert h(z) \vert  \leq \sum_{i=1}^{n}  \vert f_i(z) \vert   \leq 1 \text{ for all } \; \;  z \in \D. 
\end{equation}
Then the question is does  (\ref{data})  always implies $h \in \mathcal {I} (f_1, f_2, ...,f_n),$ ideal generated by $ f_1, f_2, ...,f_n$?  Of course,  (\ref{data})  is a necessary condition, but the counter example provided by Rao \cite{rao} suggests that it is far from being sufficient. \\

\textit{Rao's Counter Example}: If $B_1$ and $B_2$ are  Blaschke products without common zeros for which
$\underset{z \in \D } \inf \left ( \vert B_1 (z) \vert + \vert B_2 (z) \vert \right)=0,$  then $ \vert B_1 B_2 \vert \leq \left ( \vert B_1 \vert ^2 + \vert B_2 \vert ^2 \right),$  but $B_1 B_2 \notin  \mathcal I \left ( B_1 ^2, B_2 ^2 \right ). $ \\

However,  T. Wolff's beautiful proof  (see \cite{garnett}, Theorem 2.3  in page 319)  showed that the condition (\ref{data})   is sufficient for $h^3 \in \mathcal {I} (f_1, f_2, ...,f_n).$  Wolff's Theorem can be rephrased as follows: 
%%%%%%%%%%%%%%%%%%%%%%%%%%%%%%%%%%%%%%%%%%%%%%%%%%%%%%%%%%%%%%%%%%%%%%%%%%%%%%%%%%%%%%%%%%%%%%%%%%

\begin{wolffthm}

Let $ F(z) = (f_1(z), f_2(z), \dots, f_n(z)),  f_j \in H^{\infty}(\mathbb{D}), \;   h \in H^{\infty}(\mathbb{D}).$  If 

$$
 |h(z)| \leq \sqrt {F(z)F(z)^*}  \; \text{ for all } \; \;\;  z \in \mathbb{D},$$

then
$$ h^3 \in \mathcal{I} ( \{ f_j\}_{j=1}^n).$$

%the ideal generated by $\{f_j\}_{j=1}^n$ in $H^{\infty}(\mathbb{D}).$
\end{wolffthm}
 
  But,  it was shown by Treil \cite{treil2} that this is not sufficient for $p=2$. \\
 
Many authors, independently, have considered this question, including  Cegrell \cite{cegrell1}, Pau \cite{pau}, Trent \cite{estimate}, and Treil \cite{T3}, for $p=1$. We refer this as a  problem of \lq  \lq ideal membership." It 
is Treil who has given the best known sufficient condition for ideal membership. We state Treil's Theorem as follows:

\begin{tr}
Let $ F(z)= (f_1(z), f_2(z),...),  \; f_j  \in H^{\infty}(\mathbb{D})$, $F(z) F(z)^{\ast} \leq 1  \text{ for all } \; z\in \D$,  and $ h\in H^{\infty}(\mathbb{D})$ such that 
$$  F(z)F(z)^{\ast}\;  \psi \left( F(z)F(z)^{\ast}\right) \geq |h(z)|  \text{ for all }  z\in \mathbb{D},$$ where $\psi:[0,1]\rightarrow [0,1]$ is a non-decreasing  function such that $\int _{0}^{1} \frac{\psi(t)}{t} dt <\infty.$ Then there exists $G \in H_{l^2}^{\infty}(\D)$ such that $$F(z)G(z)^{T}= h(z),\;\; \text{for all} \;\; z\in \D \text{.}$$
\end{tr}
An example of a function $ \psi $ that works in the case when $ F(z) $ is an $n$-tuple, $ n < \infty,  $ is 
\begin{equation*}
\psi(t) = \frac{1} {(\ln t^{-2})(\ln_2 t^{-2}) \dots (\ln_n t^{-2})(\ln_{n+1} t^{-2})^{1+\epsilon}} \text{,}
\end{equation*}
  where $\ln_k(t)=\underbrace {\ln \ln ... ln}_\text{ k+1 times}(t)$  and $ \epsilon > 0.$ \\

  %%%%%%%%%%%%%%%%%%%%%%%%%%%%%%%%%%%%%%%%%%%%%%%%%%%%%%%%%%%%%%%%%%%%%%%%%%%%%%%%%%%%%%%%%%%%%%%%%%%%%%%%%%%%%%%%%%%%%%%%%%%%%%%%%%%%%%%%%%%%%%%%%%%%%%%%%%%%%%%%%%%%%%%%%%%%%%%%%%%%%%%%%%%%%%%%%%%%%%%%%%%%%%%%%%%%%%%%%%%%%%%%%
  %  
  %      THEROEMMM22222222
  
  Applying Treil's  result,  we   extend the analogue of \lq \lq ideal theorem" on   $H^{\infty}_{\mathbb{I}}(\mathbb{D}).$ Recall that $  H^{\infty}_{\mathbb{I}}(\mathbb{D})$  is a sub-algebra  of $H^{\infty}(\D).$ Also, for $F = (f_1, f_2, . . . ), f_j =f_{c_j}+\phi _{f_j} \in  H^{\infty}_{\mathbb{I}}(\mathbb{D})$, we denote $F=F_c+\phi_F$. In the case  that  $F_c =0,$  several authors have given sufficient conditions   for ideal membership, for example, see \cite{GMN}, \cite{mortini},  and \cite{von}. For the case $F_c \neq 0$, we provide the following theorem:
  
\begin{thm} \label{th2}
Let $ F(z) = (f_1(z), f_2(z), \dots),  \; f_j \in H^{\infty}_{\mathbb{I}}(\mathbb{D})$ such that $F_c \neq 0$, and suppose  $$ \vert h(z) \vert  \leq  F(z)F(z)^{\ast} \psi \left( F(z)F(z)^{\ast} \right)\;  \leq 1 \; \text{ for all } \; z\in \D,$$ where $\psi$ is the function given in Treil's theorem.  Then there exists $ V = (v_1(z), v_2(z), \dots ), \; v_{j} \in H^{\infty}_{\mathbb{I}}(\mathbb{D})$ such that $$ (a) \; F(z)V(z)^T = h (z) \; \text{ for all}\;  z \in \D \; \; \text{and} $$
$$ (b) \; \Vert V \Vert _{\infty} \leq  C_0 \left( { 1+ \frac{1}{\Vert F_c \Vert}}  \right) , $$ where $C_0$ is the estimate for the $H^{\infty}(\D) $ solution obtained in \cite{T3}. 
\end{thm}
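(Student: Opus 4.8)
The plan is to reduce the problem on $H^\infty_{\mathbb{I}}(\mathbb{D})$ to Treil's Ideal Theorem on $H^\infty(\mathbb{D})$, exactly in the spirit of the proof of Theorem \ref{th1}. Write $F = F_c + \phi_F$ with $F_c \in \ell^2 \setminus \{0\}$ a constant vector and $\phi_F \in \mathbb{I}_{\ell^2}$. Since $\lvert h(z)\rvert \le F(z)F(z)^{\ast}\,\psi(F(z)F(z)^{\ast}) \le 1$ for all $z$, the hypotheses of Treil's theorem are met for the $H^\infty(\mathbb{D})$-valued data $(F,h)$, so there is $G = (g_1,g_2,\dots) \in H^\infty_{\ell^2}(\mathbb{D})$ with $F(z)G(z)^T = h(z)$ on $\mathbb{D}$ and $\lVert G\rVert_\infty \le C_0$, where $C_0$ is the constant from \cite{T3}. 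The difficulty is that $G$ need not take values in $H^\infty_{\mathbb{I}}(\mathbb{D})$: each $g_j$ is a bounded analytic function but its "constant part" is uncontrolled and its non-constant part need not lie in $\mathbb{I}$. So the main step is to correct $G$ to an $H^\infty_{\mathbb{I}}(\mathbb{D})$-valued solution $V$.

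The correction uses $F_c$. Fix a unit vector in the direction of $F_c$, or more directly work with $F_c$ itself: since $F_c \ne 0$ and $F(z)F(z)^\ast \le 1$ one has $\lVert F_c\rVert \le 1$. The idea is to split each $g_j = g_{j}(0) + (g_j - g_j(0))$; the functions $g_j - g_j(0)$ belong to $\mathbb{I}$ only if we are lucky, so instead one writes $g_j = \lambda_j + \big(g_j - \lambda_j\big)$ for cleverly chosen constants $\lambda_j$ and then reallocates the constant vector $\Lambda = (\lambda_1,\lambda_2,\dots)$ by adding a multiple of a fixed solution of the constant equation. Concretely: evaluating $F(z)G(z)^T = h(z)$ at a point where the relevant quantities are constants (or passing to the constant parts of both sides, using that $\mathbb{I}$ is a proper closed ideal so that $H^\infty_{\mathbb{I}}(\mathbb{D})$ sits inside $H^\infty(\mathbb{D})$ with a well-defined "$c$-part" functional when restricted appropriately) forces a compatibility relation on $\Lambda$. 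Then one sets
\begin{equation*}
V(z) \;=\; \big(G(z) - \Lambda\big) \;+\; \frac{\overline{F_c}}{\lVert F_c\rVert^2}\,\big(h(z) - h_c\big) \;+\; \Lambda_0,
\end{equation*}
where $h = h_c + \phi_h$ and $\Lambda_0$ is a constant vector chosen so that $F_c \Lambda_0^T = h_c$ — for instance $\Lambda_0 = \overline{F_c}\,h_c / \lVert F_c\rVert^2$. One checks that $G(z)-\Lambda$ has entries in $\mathbb{I}$ (each entry vanishes appropriately / is a non-constant bounded analytic function lying in $\mathbb{I}$ after the subtraction, using $\phi_F$ to absorb the discrepancy), that $(h(z)-h_c) \in \mathbb{I}$ since $\phi_h \in \mathbb{I}$, and hence $V(z) = (V_c) + \phi_V(z)$ with $\phi_V \in \mathbb{I}_{\ell^2}$, so $v_j \in H^\infty_{\mathbb{I}}(\mathbb{D})$. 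A direct computation using $F_c \Lambda^T = $ (constant part of $h$) and $F_c\Lambda_0^T = h_c$ gives $F(z)V(z)^T = h(z)$ for all $z$.

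For the estimate (b), the term $G - \Lambda$ contributes at most $\lVert G\rVert_\infty + \lVert \Lambda \rVert \le C_0 + \lVert G\rVert_\infty \le 2C_0$ (bounding $\lVert\Lambda\rVert$ by $\lVert G\rVert_\infty$ via the maximum principle if $\lambda_j = g_j(0)$), while the correction term is bounded in $\ell^2$-norm by $\frac{\lVert F_c\rVert}{\lVert F_c\rVert^2}\,\lVert h - h_c\rVert_\infty = \frac{1}{\lVert F_c\rVert}\lVert h-h_c\rVert_\infty$, and $\lVert h - h_c\rVert_\infty \le 1 + \lvert h_c\rvert \le 2$ since $\lvert h\rvert \le 1$; similarly $\lVert \Lambda_0\rVert \le 1/\lVert F_c\rVert$. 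Collecting the pieces yields a bound of the shape $C_0\big(1 + \tfrac{1}{\lVert F_c\rVert}\big)$ after absorbing absolute constants into $C_0$ (one may need to enlarge $C_0$ by a fixed numerical factor, which is harmless since $C_0$ is "the estimate obtained in \cite{T3}" up to such normalization). The main obstacle is the bookkeeping in the correction step — verifying that after subtracting $\Lambda$ the entries genuinely land in the ideal $\mathbb{I}$ and that the reallocation preserves the identity $F V^T = h$ exactly — which hinges on carefully using $\phi_F$ to reconcile the constant parts; this is the same mechanism as in the proof of Theorem \ref{th1} with $h \equiv 1$, so once that case is in hand the present argument is a routine adaptation.
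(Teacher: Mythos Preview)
Your proposal has a genuine gap: the correction you write down neither preserves the equation $F V^T = h$ nor lands in $(H^\infty_{\mathbb{I}}(\mathbb{D}))_{\ell^2}$. With your choice $V = (G-\Lambda) + \dfrac{\overline{F_c}}{\lVert F_c\rVert^2}\,\phi_h + \Lambda_0 = G - \Lambda + \dfrac{\overline{F_c}}{\lVert F_c\rVert^2}\,h$, one computes
\[
F V^T \;=\; h \;-\; F\Lambda^T \;+\; \Big(1+\tfrac{\phi_F F_c^\ast}{\lVert F_c\rVert^2}\Big)h,
\]
and there is no choice of the constant vector $\Lambda$ that kills the extra terms for all $z$, since $\phi_F \Lambda^T$ cannot be made to equal $\phi_h + \tfrac{\phi_F F_c^\ast}{\lVert F_c\rVert^2}\,h$ in general. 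More fundamentally, the assertion that $G-\Lambda$ has entries in $\mathbb{I}$ is unsupported: $G$ is an arbitrary $H^\infty_{\ell^2}$ solution coming from Treil's theorem, and for a generic $H^\infty$ function there is no ``constant part'' functional whose subtraction throws the remainder into the proper ideal $\mathbb{I}$. The parenthetical ``using $\phi_F$ to absorb the discrepancy'' is exactly where the argument needs a concrete mechanism, and none is provided.

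What the paper actually does is use Lemma~\ref{lemma}: the operator $Q_{F(z)}$ has $\ran Q_{F(z)}=\ker F(z)$ and satisfies the algebraic identity $(F G^T)I_{\ell^2} - G^T F = Q_F Q_G^T$. Applying this with $FG^T=h$ and then hitting both sides with $F_c^\ast/\lVert F_c\rVert^2$ yields
\[
G(z)^T + Q_{F(z)}Q_{G(z)}^T\,\frac{F_c^\ast}{\lVert F_c\rVert^2}
\;=\;
h_c\,\frac{F_c^\ast}{\lVert F_c\rVert^2}
\;+\;
\bigl(\phi_h(z) - G(z)^T\phi_F(z)\bigr)\,\frac{F_c^\ast}{\lVert F_c\rVert^2}.
\]
The left side is $V(z)^T$; since $FQ_F=0$ one gets $F V^T = FG^T = h$ for free, and the right side is manifestly (constant vector) $+$ (entries in $\mathbb{I}$), because $\phi_h\in\mathbb{I}$ and $G^T\phi_F$ has entries in the ideal $\mathbb{I}$. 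The norm bound $\lVert V\rVert_\infty \le (1+\tfrac{1}{\lVert F_c\rVert})\lVert G\rVert_\infty$ follows directly from the defining expression for $V$, with no need to enlarge $C_0$. The $Q$-operator is the missing idea in your proposal: it simultaneously furnishes a correction in $\ker F$ (so the equation is preserved exactly) and, via the identity, an alternative expression that is visibly $H^\infty_{\mathbb{I}}$-valued.
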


%%%%%%%%%%%%%%%%%%%%%%%%%%%%%%%%%%%%%%%%%%%%
%\begin{rem}
%The reader should not be confused with the condition $F_c \neq 0$ in Theorem \ref{th2}. 
% Due to the uniform lower bound in corona condition in Theorem \ref{th1}, there is no $\alpha$  in $\D$ such that   $\phi_{f_j}(\alpha)=-f_{c_j} $ for all $j$.  On the other hand, in Theorem \ref{th2}, the functions $f_j(z)=f_{c_j}+\phi_{f_j}(z)$ may have common zeros in $\D$. That means, though $F_c \neq 0, $  there is still a possibility that  all the $f_j$ vanish together.   The obvious case is, there may exists $\alpha \in \D$ such that $\phi_{f_j}(\alpha)=-f_{c_j}$ for all $j$. Therefore, the condition $F_c \neq 0$ in Theorem \ref{th2} does not play any role for the boundedness of $F(z)F(z)^{\ast}.$ 
  %\end{rem} 

\begin{cr1}\label{cr}
Let $ F(z) = (f_1(z), f_2(z), \dots),  \; f_j \in H^{\infty}_{\mathbb{I}}(\mathbb{D})$ such that $F_c \neq 0$, and suppose  $$ \vert h(z) \vert  \leq  \sqrt{ F(z)F(z)^{\ast}  }  \leq 1 \; \text{ for all } \; z\in \D .$$ Then there exists $ V = (v_1(z), v_2(z), \dots ), \; v_{j} \in H^{\infty}_{\mathbb{I}}(\mathbb{D})$ such that $$ (a) \; F(z)V(z)^T = h^{3}  (z) \; \text{ for all}\;  z \in \D \; \; \text{and} $$
$$ (b) \; \Vert V \Vert _{\infty} \leq  C_1 \left( { 1+ {\frac{1}{\Vert F_c \Vert}}} \right) , $$ where $C_1$ is the estimate for the $H^{\infty}(\D) $ solution obtained in \cite{estimate}. 
\end{cr1}

%Hence, 

% $$\Vert V \Vert _{\infty} \leq \left( { 1+ \frac{1}{\underset{j} \max {\vert f_{c_j} \vert}} } \right) \frac{C_0}{\epsilon ^2} \ln \left( \frac{1}{\epsilon ^2}\right),$$ where $C_0$ is a universal constant. 
%This completes the proof of Theorem 1. 

  %% so the the above equation does not have a solution in $BH^{\infty}$ and hence it, obviously, does not have a solution in $ \C+BH^{\infty}$.

\section{Preliminaries}

In this section, we discuss the method of our proofs and also provide some required lemmas.   To prove Theorem \ref{th1} and Theorem  \ref{th2}  in $H_{\mathbb{I}}^{\infty}(\D),$  we first find the corresponding solutions in the bigger algebra $H^{\infty}(\D).$  Then we add some correction terms on the $H^{\infty}(\D)$ - solutions to get the required solutions in our smaller algebra $H_{\mathbb{I}}^{\infty}(\D)$. For example, provided the corona condition, using Uchiyama version of corona theorem, we can easily find a solution $G$ in $ (H^{\infty}(\D))_{l^2}$ such that $F(z)G(z)^T=1 \; \text{ for all} \;  z \in \D$. But, our goal is finding a solution $U \in   \left (H^{\infty}_{\mathbb{I}}(\mathbb{D})\right)_{l^2}$ such that $F(z)U(z)^T=1$ for all $z\in \D.$ For this, if we can find an operator $Q$ so  that  
$M_Q (H^{\infty}(\D))_{l^2} \subseteq (H^{\infty}(\D))_{l^2}$ and for all $z \in \D$,  $\ran Q (z) = \ker F (z),$  then we can  construct the required solution  $U$ as $$U^{T}: = G ^T+Q X^T, $$ with a right choice of $X \in (H^{\infty}(\D))_{l^2} $. This solves our problem as follows:
$$F(z)U(z)^T=F(z)G(z)^T=1, \; \text{for all}\; z\in \D,$$ and the proper choice of $X$ will  make $U \in  \left (H^{\infty}_{\mathbb{I}}(\mathbb{D})\right)_{l^2}.$\\

The next lemma is a linear algebra result which gives us the desired $Q$ operator and so enables us to write down the most general pointwise solution of $F(z)U(z)^T=1.$ This lemma can be found in Ryle -Trent \cite{RT}, but we provide a proof for convenience.

\begin{lemma} \label{lemma}
Let  $\{ a_j \} _{j=1}^{\infty} \in l^2 $ and $A = ( a_1 , a_2 , \dots ) \in \mathcal{B} (l^2, \mathbb{C}).$ Then there exists a matrix $Q_A$ of order  $ \infty \times \infty$ such that the entries of $Q_A$ are either $\underset{-}+ a_j$ or $0$ and $Q_A$ satisfies:
\begin{equation} \label{ker}
\ran Q_A= \ker A   
\end{equation} 
 and $$(AA^{\ast})I _{l^2}- A^{\ast}A= Q_A Q_A^{\ast} \;  \; \text{with} \; \;  \Vert Q_A\Vert _{\mathcal{B}(l^2) } \leq \Vert A\Vert _{l^2}.$$  
 
Also, if  $\{d_j\}_{j=1}^{\infty} \in l^2$ and $D=(d_1 , d_2 , \dots )$, then 
 \begin{equation} \label{relation}
 (AD^T)I_{l^2}-D^TA=Q_A Q_D^T.
 \end{equation}
\end{lemma}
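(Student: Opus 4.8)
The plan is to build the matrix $Q_A$ explicitly from the components of $A$ and then verify the three asserted properties by direct computation. First I would give the construction: for $A=(a_1,a_2,\dots)$, define $Q_A$ to be the $\infty\times\infty$ matrix whose $(i,j)$ entry is $a_j$ when $i=j$... no, that won't annihilate $A$. The right choice is the "antisymmetric-type" construction: set the $(i,j)$ entry of $Q_A$ equal to $-a_i$ for $i<j$ arranged so that column $j$ of $Q_A$ encodes the vector $a_j e_{(\cdot)} - a_{(\cdot)} e_j$ type differences; concretely one takes $Q_A$ to have columns indexed so that applying $A$ kills each column. The cleanest formulation: the columns of $Q_A$ span $\{x\in l^2 : Ax = 0\}$ and are built from the $2\times 2$ minors $a_i e_j - a_j e_i$ (suitably normalized/enumerated), each of whose entries is $\pm a_k$ or $0$. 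I would then observe that $\ran Q_A \subseteq \ker A$ is immediate since $A(a_i e_j - a_j e_i) = a_i a_j - a_j a_i = 0$, and the reverse inclusion $\ker A \subseteq \ran Q_A$ follows because any $x$ with $\sum_k \bar a_k x_k = 0$ can be written as a combination of the elementary differences.

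Next I would verify the operator identity $(AA^\ast)I_{l^2} - A^\ast A = Q_A Q_A^\ast$. Since $A^\ast A$ is the rank-one operator $x \mapsto (Ax)A^\ast = \langle x, \overline{A}\rangle \overline{A}$ with matrix entries $(\overline{a_i a_j})$... (being careful with conjugates per the paper's convention), and $AA^\ast = \|A\|_{l^2}^2$ is scalar, the left side is the matrix with diagonal entries $\sum_{k\neq i}|a_k|^2$ and off-diagonal entries $-\overline{a_i}a_j$ (up to conjugation conventions). I would then compute $Q_A Q_A^\ast$ from the explicit columns and check the entries match — this is a bookkeeping computation using $\sum_{k} |a_k|^2$ as the total. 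The norm bound $\|Q_A\| \le \|A\|_{l^2}$ then drops out: from the identity, $Q_A Q_A^\ast \le (AA^\ast) I = \|A\|_{l^2}^2 I$ since $A^\ast A \ge 0$, hence $\|Q_A\|^2 = \|Q_A Q_A^\ast\| \le \|A\|_{l^2}^2$.

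Finally, for the mixed relation $(AD^T)I_{l^2} - D^T A = Q_A Q_D^T$, I would note this is the "polarization" of the first identity: both sides are bilinear in $(A,D)$ and agree when $D = \overline{A}$ by the previous step, so replacing $D$ by $A$ in one slot and verifying entrywise gives the general case. Concretely, the $(i,j)$ entry of $Q_A Q_D^T$ is $\sum$ over the elementary difference columns, which collapses to $\delta_{ij}\sum_k a_k d_k - d_i a_j$, matching the left side. The main obstacle I anticipate is purely organizational: choosing an enumeration of the "elementary difference" columns $a_i e_j - a_j e_i$ that (i) genuinely spans $\ker A$ in the infinite-dimensional $l^2$ setting — one must argue a density/closure point, since finitely many differences only span a dense subspace of $\ker A$ — and (ii) makes $Q_A$ a bona fide bounded operator with the clean matrix identities above rather than merely a formal array. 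Handling the closure of the range (so that $\ran Q_A = \ker A$ exactly, not just densely) will require using that $Q_A Q_A^\ast = (AA^\ast)I - A^\ast A$ has closed range, which follows because $A^\ast A$ is compact (rank one) and $AA^\ast I$ is bounded below on $\ker A$.
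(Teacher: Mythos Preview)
Your proposal is correct and follows essentially the same route as the paper. The paper's construction is precisely your ``elementary differences'' idea: it groups the columns $a_{k+j}e_k - a_k e_{k+j}$ into blocks $A_k$ (one block for each first index $k$), sets $Q_A=[A_1,A_2,\dots]$, and verifies $Q_AQ_A^{\ast}=(AA^{\ast})I_{l^2}-A^{\ast}A$ by the same entrywise bookkeeping you outline; the mixed identity and the remaining claims are then read off exactly as you suggest, with the paper deducing $\ran Q_A=\ker A$ from the operator identity rather than arguing it directly.
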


Following few examples should be helpful to understand the  Lemma \ref{lemma} in a simple way.\\

Let $f_1, f_2, ...,f_n \in H^{\infty}(\D)$ and fix $z\in \D$. Take $F= [f_1 \; \; f_2\; , ..., \; f_n]. $\\
  For $n=2,$
$F= [f_1 \; \; f_2],$ 
$Q_F=$ 
$\begin{bmatrix}
f_2\\
-f_1
\end{bmatrix}. $

Thus, 
$(FF^{\ast})I_2 - F^{\ast}F=  \begin{bmatrix}
\vert f_2 \vert ^2  & - \bar{f_1} f_2\\
\bar{f_2} f_1 &  \vert f_1\vert^2
\end{bmatrix} = Q_F Q_F^{\ast}.$

\vspace{12pt}
Also, for any  $D= \begin{bmatrix} 
d_1 & d_2
\end{bmatrix}$,  \\

$(FD^T)I_2-D^TF= \begin{bmatrix}
f_2  d_2  & - d_1 f_2\\
-d_2 f_1 &   f_1d_1
\end{bmatrix} = Q_F Q_D^T.$\\

\vspace{12pt}

Similarly, for  $n=3,$ we take  $F= \begin{bmatrix} 
f_1 & f_2 & f_3
\end{bmatrix}.$\\

So,

$Q_F= \begin{bmatrix}
f_2 & f_3 & 0\\
-f_1 & 0 & f_3\\
0 & -f_1 & -f_2
\end{bmatrix}. $\\

\vspace{12pt}
And, for  $n=4,$  $F= \begin{bmatrix} 
f_1 & f_2 & f_3 & f_4
\end{bmatrix}$ and \\

$Q_F= \begin{bmatrix}
f_2 & f_3 &  f_4 & 0 &0 & 0\\
-f_1 & 0  & 0 & f_3 & f_4 & 0\\
0 & -f_1 & 0&  -f_2 & 0 & f_4\\
0 & 0 & -f_1& 0 & -f_2 & f_3
\end{bmatrix}.$\\
\vspace{12pt}

Form the above pattern, it is easy to see that the operators $Q_F$'s  can be constructed inductively. Also, it is clear from (3),  applied to $A = F(z)$ and $Q_D = Q_{F(z)}$, that $\ran Q_F(z)= \ker F(z).$ \\

We are now ready to prove Lemma \ref {lemma}.\\

\begin{proof}[Proof of Lemma \ref{lemma}]
For $k \in \mathbb{N},$ define \\

$A_k = { \begin{bmatrix}
0  &  0 &  0 &  \hdots \\
\vdots & \vdots & \vdots & \ddots\\
c_{k+1} & c_{k+2} & c_{k+3}  & \hdots \\
-c_k & 0 & 0  & \hdots  \\
0 &  - c_k & 0 & \hdots  \\
0 &  0  & -c_k& \hdots  \\
\vdots & \vdots & \vdots & \ddots
\end{bmatrix}}$\\
\vspace{12pt}

Multiplying $A_k$  by $A_{k}^{\ast}$, we get \\

$A_k  A_{k}^{\ast} =  { \begin{bmatrix}
0  & \hdots &  0 &  0 & 0 & 0&  \hdots \\
\vdots & 0 & \vdots & \vdots & \vdots & \vdots & \hdots \\
0 & \hdots & 0 & 0 & 0 &  0 & \hdots \\
0 & \hdots  &  0 & \sum_{j=k+1}^{\infty} \vert c_j \vert ^2 & - \bar{c} _k c_{k+2} & - \bar{c}_k  c_{k+3}  & \hdots \\
0 & \hdots  &  0 & -c_k \bar{c}_{k+2} &  \vert c_k \vert ^2 & 0  & \hdots\\
0 & \hdots  &  0 & -c_k \bar{c}_{k+3} &  0 & \vert c_k \vert ^2  & \hdots \\
\vdots & \vdots & \vdots &  \vdots & \vdots & \vdots & \ddots
\end{bmatrix}}$\\

Hence,  \\
 $$\sum_{k=1}^{\infty} A_k  A_{k}^{\ast} =
 \begin{bmatrix}
\sum_{k \neq 1}^{\infty}  \vert c_k \vert ^2  & - \bar{c}_1  c_2   &  - \bar{c}_1  c_3 &   \hdots \\
- \bar{c}_2  c_1  & \sum_{k \neq 2}^{\infty}  \vert c_k \vert ^2 & - \bar{c}_2  c_3 & \hdots  \\
 - \bar{c}_3 c_1  & - \bar{c}_3 c_2 & \sum_{k \neq 3}^{\infty}  \vert c_k \vert ^2  &  \hdots \\
\vdots & \vdots & \vdots &  \ddots
\end{bmatrix}=CC^{\ast}I_{l^2}-C^{\ast}C.$$

Thus the required operator $Q_A$  can be defined   as 
$$ Q_A=[A_1,  A_2,....] \in  \mathcal{B} (\oplus_{1}^{\infty} l^2, l^2).$$

We note that (\ref{relation}) follows in a similar manner.\\

\end{proof}

We also need the following key lemma.

\begin{lemma} Assume that $\{{f_j}\}_{j=1}^{\infty} \subset H^{\infty}_{\mathbb{I}}(\mathbb{D})$ and 
$$ 0 < \epsilon^2 \leq \sum_{j=1}^{\infty} \vert f_j (z) \vert ^2 \leq 1\; \text{ for all} \; z \in \D.$$
Then   $$ \text {(a)} \;  \;    \epsilon ^2 \leq F_c F_c^{\star} = \sum_{j=1}^{\infty} \vert f_{c_j}  \vert ^2 \leq 1$$

and   $$\text{ (b)} \;  \;   \Vert \phi_F \Vert _{\infty} = \sup_{z \in \D} \left (   \sum_{j=1}^{\infty} \vert \phi_{f_j} (z) \vert ^2  \right) \leq 2.  $$

\end{lemma}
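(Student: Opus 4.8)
The plan is to analyze the decomposition $f_j = f_{c_j} + \phi_{f_j}$ and understand how the constant parts $f_{c_j}$ and the ideal parts $\phi_{f_j}$ behave under the corona-type hypothesis. The key observation is that for each $j$, the constant $f_{c_j}$ is the limit of $f_j(z)$ along any sequence approaching a point of the maximal ideal space at which every element of $\mathbb{I}$ vanishes; more concretely, since $\mathbb{I}$ is a proper closed ideal of $H^\infty(\mathbb{D})$, there is a point $m$ in the maximal ideal space $\mathcal{M}(H^\infty)$ with $\phi(m) = 0$ for all $\phi \in \mathbb{I}$ (a common zero in the corona-theoretic sense), and evaluating the Gelfand transform of $f_j = f_{c_j} + \phi_{f_j}$ at $m$ gives $\hat f_j(m) = f_{c_j}$.

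For part (a): I would take the net in $\mathcal{M}(H^\infty)$, or equivalently a sequence $z_n \in \mathbb{D}$ with $\phi(z_n) \to 0$ for each $\phi \in \mathbb{I}$ (using that $\mathbb{I}$ is proper so that, after Carleson's corona theorem, the common zero set of $\mathbb{I}$ in $\mathcal M(H^\infty)$ is nonempty and the boundary can be approximated by disk points). Along such a sequence, $f_j(z_n) \to f_{c_j}$ for each fixed $j$. Then from the hypothesis $\epsilon^2 \le \sum_j |f_j(z_n)|^2 \le 1$ I would pass to the limit. The upper bound $\sum_j |f_{c_j}|^2 \le 1$ follows immediately from Fatou's lemma (lower semicontinuity of the sum under pointwise limits). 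The lower bound $\epsilon^2 \le \sum_j |f_{c_j}|^2$ requires slightly more care because the sum is infinite: here I would use that $\sum_j |f_j(z)|^2 \le 1$ uniformly, so the tails $\sum_{j>N} |f_j(z)|^2$ are not automatically small, but combined with $\sum_j |f_{c_j}|^2 \le 1$ I can argue that for any $\delta > 0$ there is $N$ with $\sum_{j > N} |f_{c_j}|^2 < \delta$, and then for $n$ large $\sum_{j \le N} |f_j(z_n)|^2 \ge \epsilon^2 - \sum_{j > N}|f_j(z_n)|^2 \ge \epsilon^2 - 1 + \sum_{j \le N}|f_{c_j}|^2 \cdots$ — actually the cleanest route is: $\sum_{j\le N}|f_j(z_n)|^2 \to \sum_{j \le N}|f_{c_j}|^2$ as $n \to \infty$, and $\sum_{j \le N}|f_j(z_n)|^2 \ge \sum_j|f_j(z_n)|^2 - \sum_{j>N}|f_j(z_n)|^2 \ge \epsilon^2 - 1 + \sum_{j\le N}|f_{c_j}(z_n)|^2$... so I instead directly bound: for each $N$, $\sum_{j=1}^{N}|f_{c_j}|^2 = \lim_n \sum_{j=1}^N |f_j(z_n)|^2 \le \lim_n \sum_{j=1}^\infty |f_j(z_n)|^2 \le 1$, giving the upper bound; and for the lower bound, $\sum_{j=1}^\infty |f_{c_j}|^2 = \lim_N \sum_{j=1}^N |f_{c_j}|^2 = \lim_N \lim_n \sum_{j=1}^N |f_j(z_n)|^2$, and since for fixed $n$, $\sum_{j=1}^\infty |f_j(z_n)|^2 \ge \epsilon^2$ and the partial sums converge, a diagonal argument lets me extract the bound $\ge \epsilon^2$.

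For part (b): I would write $\phi_{f_j} = f_j - f_{c_j}$, so that for each $z$, $\left(\sum_j |\phi_{f_j}(z)|^2\right)^{1/2} = \|F(z) - F_c\|_{\ell^2} \le \|F(z)\|_{\ell^2} + \|F_c\|_{\ell^2}$ by the triangle inequality in $\ell^2$. By hypothesis $\|F(z)\|_{\ell^2} \le 1$ and by part (a) $\|F_c\|_{\ell^2} \le 1$, so $\|\phi_F(z)\|_{\ell^2} \le 2$ for all $z \in \mathbb{D}$, hence $\|\phi_F\|_\infty \le 2$. (One should double-check that the displayed inequality in the statement, which writes $\Vert \phi_F \Vert_\infty = \sup_z \sum_j |\phi_{f_j}(z)|^2$ without a square root, is consistent with the convention; with the square-root norm one gets the bound $2$, and even with the squared quantity the bound $4 \le$ something works, but I will follow the paper's $\ell^2$-norm convention and record $2$.)

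The main obstacle is making the limiting argument in part (a) fully rigorous — specifically, justifying the existence of a sequence $z_n \in \mathbb{D}$ along which simultaneously $\phi(z_n) \to 0$ for all $\phi$ in the (possibly non-countably-generated) ideal $\mathbb{I}$, and then interchanging the two limits $\lim_N$ and $\lim_n$ in the double sum to land on the sharp lower bound $\epsilon^2$ rather than something weaker. The interchange is the delicate point: it works because the partial sums $\sum_{j=1}^N |f_j(z_n)|^2$ are monotone in $N$ and bounded by $1$, so by Dini-type/monotone convergence reasoning applied on the compact closure of $\{z_n\}$ in $\mathcal{M}(H^\infty)$ one can pass to the limit; alternatively one works entirely in $\mathcal{M}(H^\infty)$ from the start, evaluating Gelfand transforms at a common zero $m$ of $\mathbb{I}$, where $\widehat{f_j}(m) = f_{c_j}$ and the inequalities $\epsilon^2 \le \sum_j |\widehat{f_j}(m)|^2 \le 1$ hold by continuity of each finite partial sum together with a tail estimate. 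I expect this compactness/Gelfand-transform route to be the cleanest and would present part (a) that way, relegating the $\ell^2$ triangle inequality of part (b) to a single line.
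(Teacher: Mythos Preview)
Your treatment of part (b) via the $\ell^2$ triangle inequality is exactly what the paper does, and your upper bound in (a) is essentially the paper's argument rephrased in maximal-ideal-space language (the paper instead says: for each finite $N$, the functions $\phi_{f_1},\dots,\phi_{f_N}$ lie in the proper ideal $\mathbb{I}$, so by Carleson's theorem $\inf_{z}\sum_{j\le N}|\phi_{f_j}(z)|^2=0$, forcing $\sum_{j\le N}|f_{c_j}|^2\le 1$).

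The genuine gap is in your lower bound for (a). Knowing that each $\phi_{f_j}$ vanishes at a common point $m\in\mathcal{M}(H^\infty)$ only gives \emph{componentwise} convergence $\phi_{f_j}(z_\alpha)\to 0$ along an approximating net; it does not give $\|\phi_F(z_\alpha)\|_{\ell^2}\to 0$, and without that you cannot pass the inequality $\epsilon^2\le\sum_j|f_j(z_\alpha)|^2$ to the limit. Your proposed ``tail estimate'' does not exist here: there is no uniform control on $\sum_{j>N}|f_j(z)|^2$ or $\sum_{j>N}|\phi_{f_j}(z)|^2$ independent of $z$, so the double-limit/Dini argument you sketch cannot be completed. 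Equivalently, $\sum_j|\hat f_j|^2$ is only lower semicontinuous on $\mathcal{M}(H^\infty)$, which preserves upper bounds from the dense set $\D$ but not lower bounds.

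The paper resolves this by reversing the order: prove the upper bound of (a), then prove (b), and only then attack the lower bound of (a). Once (b) gives $\sup_z\sum_j|\phi_{f_j}(z)|^2<\infty$, one applies the \emph{infinite} corona theorem (Rosenblum--Tolokonnikov--Uchiyama) in contrapositive form: since $\{\phi_{f_j}\}\subset\mathbb{I}$ and $\mathbb{I}$ is a proper \emph{closed} ideal, the family cannot satisfy a uniform lower bound, hence $\inf_{z}\sum_j|\phi_{f_j}(z)|^2=0$. This produces a sequence $z_k\in\D$ with $\|\phi_F(z_k)\|_{\ell^2}\to 0$ in norm, and then the triangle inequality
\[
\epsilon\le \|F(z_k)\|_{\ell^2}\le \|F_c\|_{\ell^2}+\|\phi_F(z_k)\|_{\ell^2}
\]
gives $\epsilon\le\|F_c\|_{\ell^2}$ upon letting $k\to\infty$. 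The key point you are missing is that the corona theorem is used not merely to locate a common zero of $\mathbb{I}$, but quantitatively, to force the full $\ell^2$-norm of $\phi_F$ to be small somewhere in $\D$.
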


\begin{proof}

Since for all $z \in \D, $ $$\epsilon^2 \leq \sum_{j=1}^{\infty} \vert f_{c_j} +  \phi_{f_j} (z) \vert ^2 \leq 1,$$

we have that for each $N \in \mathbb{N},$

$$  \sum_{j=1}^{N} \vert f_{c_j} +  \phi_{f_j} (z) \vert ^2 \leq 1.$$

But, $ \{ { \phi_{f_j}} \}_{j=1}^{N}   \subset \mathbb{I}$ and $\mathbb{I}$ is a proper ideal, so by the corona theorem
$$ \inf_{z\in \D}  \sum_{j=1}^{N} \vert  \phi_{f_j} (z) \vert ^2 =0.$$

This means that for each $N$  
$$\sum_{j=1}^{N} \vert f_{c_j} \vert ^2 \leq 1, \text{ and hence} \; \; \sum_{j=1}^{\infty} \vert f_{c_j} \vert ^2 \leq 1.$$

Thus, (b) holds, since for $z\in \D$ 

$$  \left ( \sum_{j=1}^{\infty} \vert \phi_{f_j} (z) \vert ^2  \right)^{\frac{1}{2}} \leq \left ( \sum_{j=1}^{\infty} \vert f_{c_j} + \phi_{f_j}(z) \vert ^2  \right)^{\frac{1}{2}} +  \left ( \sum_{j=1}^{\infty} \vert f_{c_j}  \vert ^2  \right)^{\frac{1}{2}}  \leq 2.$$

Now by the Rosenblum- Tolokonnikov-Uchiyama version of the corona theorem, since $ \{ { \phi_{f_j}} \}_{j=1}^{\infty}   \subset \mathbb{I}$ and $\mathbb{I}$ is a proper closed ideal and \\
$ \underset {z\in \D} \sup  \sum_{j=1}^{\infty} \vert \phi_{f_j} (z) \vert ^2  \leq 2 < \infty,$ we have 

$$ \inf_{z\in \D}  \sum_{j=1}^{\infty} \vert \phi_{f_j} (z) \vert ^2 = 0.$$
Thus there exist $\{ z_k \}_{k=1}^{\infty} \subset \D$ so that $ \underset { {k \rightarrow \infty}} \lim  \sum_{j=1}^{\infty} \vert \phi_{f_j} (z_k) \vert ^2 = 0.$\\

Therefore, from 
$$  \epsilon \leq \left ( \sum_{j=1}^{\infty} \vert f_{c_j} + \phi_{f_j}(z_k) \vert ^2  \right)^{\frac{1}{2}}  \leq  \left ( \sum_{j=1}^{\infty} \vert f_{c_j}  \vert ^2  \right)^{\frac{1}{2}} +   \left ( \sum_{j=1}^{\infty} \vert \phi_{f_j}(z_k)  \vert ^2  \right)^{\frac{1}{2}},$$ 
we deduce that 
$$ \epsilon ^2 \leq  \sum_{j=1}^{\infty} \vert f_{c_j}  \vert ^2 .$$
So (a) follows.
\end{proof}

Now we are ready to prove our theorems.

\section{The Proofs}

\begin{proof}[Proof of Theorem \ref{th1}]

Let $ F \in \left( H^{\infty}_{\mathbb{I}}(\mathbb{D}) \right)_{l^2}$,  and suppose
$$ 0< \epsilon ^2 \leq  F(z)F(z)^{\ast}\;  \leq 1 \; \text{ for all } \; z\in \D. $$
 Then we know that  there is a corona solution for $F$, say  $ G$,   which lies in $ \left( H^{\infty}(\mathbb{D}) \right)_{l^2}$ such that
\begin{align*} 
F(z)G(z)^T = &1, \;  \text{ for all } \; z\in \D \;  \text{and}\\
\Vert G \Vert _{\infty} \leq  & \frac{9}{\epsilon ^2} \ln \left( \frac{1}{\epsilon ^2}\right).
\end{align*}
Our aim is finding $U \in  \left (H^{\infty}_{\mathbb{I}}(\mathbb{D})\right)_{l^2}$ such that $F(z)U(z)^T = 1 \; \text{ for all } \; z\in \D. $ For this, we construct a new solution by adding a correction term to $G(z)^T$.

Write $ F(z) = F_c + \phi_F (z) $, where $ F_c= \{f_{c_1}, f_{c_2},...\} \in l^2 $ and $ \phi_F= \{ \phi_{f_1}, \phi_{f_2},...\}  \in \mathbb{I}_{l^2} $.

 Using (\ref{relation}), we have that 

$$I_{l^2} = (F(z)G(z)^T)I = G(z)^TF(z) + Q_{F(z)}Q_{G(z)}^T $$
This implies that
\begin{equation} \label{relation2}
I_{l^2}= G(z)^T F_c  + Q_{F(z)}Q_{G(z)}^T+ G(z)^T \phi_F (z).
\end{equation}

Applying $F_c^{\star} $ to (\ref{relation2}), we get

$$F_c^{\star} = G(z)^T F_c F_c^{\star}  + Q_{F(z)}Q_{G(z)}^T F_c^{\star} + G(z)^T \phi_F (z) F_c^{\star}.$$
Also, from Lemma 2.2, we know that $ \Vert F_c \Vert ^2 > 0 $, so
$$ \frac  {F_c^{\star} } { \Vert F_c \Vert ^2} = G(z)^T +  Q_{F(z)}Q_{G(z)}^T \frac  {F_c^{\star} } { \Vert F_c \Vert ^2} +  G(z)^T \phi_F (z) \frac  {F_c^{\star} } { \Vert F_c \Vert ^2} . $$
%Since $\mathbb{I}$ is the proper ideal of $ H^{\infty}(\D),$ the ideal $\mathcal{I} (\phi_{f_1}, \phi_{f_2},...)$ generated by $ \phi_{f_j}$'s is also proper; hence $ \underset{z\in \D} \inf \sum_{j=1}^{\infty} \vert \phi_{f_j}(z) \vert ^2=0.$  
%Thus there exists a sequence $(z_k)_{k \in \mathbb{N}}$ in $\D$  such that  $ \sum _{j=1}^{\infty}  \vert \phi_{f_j}(z_k) \vert^2 \rightarrow 0.$  \\

%Hence, 
%$$ \epsilon ^2 \leq \sum _{j=1}^{\infty}  \vert {f_j}(z_k) \vert ^2 = \sum _{j=1}^{\infty}  \vert f_{c_j} +\phi_{f_j}(z_k) \vert ^2  \rightarrow  \sum _{j=1}^{\infty}  \vert f_{c_j} \vert ^2.$$

%Therefore, the corona condition implies that  $$ 1 \geq \sum _{j=1}^{\infty} \vert{f_{c_j}} \vert ^2 \geq \epsilon ^2.$$\\

%So  $f_{c_j} \leq 1 \; \text{for all j}$ and also $F_c \neq 0.$

%Assume that $ \vert F_{c_1}\vert = {\underset{j} \max {\vert f_{c_j} \vert}}.$ Multiplying bothsides of (\ref{relation2}) by $ \underline{e}_{1}^T$, where   $\underline{e}_{1}=[1, 0, ...,],$  and dividing through by $\vert F_{c_1}\vert,$ we get that 
% \begin{align*} 
%&\frac{\underline{e}_{1} ^T}{\vert F_{c_1}\vert} = G(z)^T   + Q_{F(z)}Q_{G(z)}^T \frac{\underline{e}_{1} ^T}{\vert F_{c_1}\vert} + G(z)^T \phi_F (z) \frac{\underline{e}_{1} ^T}{\vert F_{c_1}\vert}
%\end{align*} 
Thus,
\begin{equation}\label{sol}
 G(z)^T+Q_{F(z)}Q_{G(z)}^T  \frac  {F_c^{\star} } { \Vert F_c \Vert ^2} =  \frac  {F_c^{\star} } { \Vert F_c \Vert ^2} -G(z)^T \phi_F (z)  \frac  {F_c^{\star} } { \Vert F_c \Vert ^2}.
\end{equation}
Define $$ U(z)^T:=G(z)^T+Q_{F(z)}Q_{G(z)}^T  \frac  {F_c^{\star} } { \Vert F_c \Vert ^2}.$$
Using  (\ref{ker}), we can clearly see that $$F(z)U(z)^T=F(z)G(z)^T+ F(z)Q_{F(z)}Q_{G(z)}^T  \frac  {F_c^{\star} } { \Vert F_c \Vert ^2}=F(z)G(z)^T=1,\;  \text{for all } \; z \in \D.$$
Also, the right side of  ( \ref{sol}) shows that the  solution $U$ is in $  \left (H^{\infty}_{\mathbb{I}}(\mathbb{D})\right)_{l^2} $. 
%Thus we take $ X(z)^T = Q_{G{(z)}}^T\frac{F_c^{\ast}}{\|F_c\|^2} $. 

For the norm estimate, we have that $\Vert U \Vert _{\infty} \leq   \left( { 1+ \frac{1}{\Vert F_c \Vert}} \right) \Vert G \Vert_{\infty}.$ \\

Hence, 

 $$\Vert U \Vert _{\infty} \leq \left( { 1+ \frac{1}{\Vert F_c \Vert}}  \right) \frac{9}{\epsilon ^2} \ln \left( \frac{1}{\epsilon ^2}\right).$$ %where $C_0$ is a universal constant. 
This completes the proof of Theorem 1. 
\end{proof}

\begin{proof}[Proof of Theorem \ref{th2}]

Let $ F \in H^{\infty}_{\mathbb{I}}(\D)_{l^2} $,  and suppose
$$ \vert h(z) \vert  \leq  F(z)F(z)^{\ast} \psi \left( F(z)F(z)^{\ast} \right) \;  \leq 1 \; \text{ for all } \; z\in \D $$
 By Treil's theorem,  there exists $ G \in H_{l^2}^{\infty}(\D) $ such that

$$F(z)G(z)^T = h(z) \;\;  \text{ for all } \; z\in \D $$  
and  $\Vert G \Vert _{\infty} \leq  C_0, $  where $C_0$ is the estimate for the $ H^{\infty}(\D)$-solution obtained in \cite {T3}.\\

Writing $ F(z) = F_c + \phi_F (z),  h(z)=h_c +\phi_h(z) $ and using the relation  (\ref{relation}) as in the proof of Theorem \ref{th1}, we get

 %By \ref{relation} we have that
%\begin{align*}
%&h(z) I_{l^2} = (F(z)G(z)^T)I = G(z)^TF(z) + Q_{F(z)}Q_{G(z)}^T \\
%&(h_c+\phi_h)   I_{l^2} = G(z)^T(F_c + \phi_F (z)) + Q_{F(z)}Q_{G(z)}^T\\
%&  h_c+\left( \phi_h -  G(z)^T \phi_F (z) \right)= G(z)^T F_c + Q_{F(z)}Q_{G(z)}^T
%\end{align*} 
%\textit{Case I}: {$F_c \neq 0$}:
%Multiplying both sides by ${F_c}^{\ast} $ and then dividing through by $\Vert F_c \Vert^2,$ we get 

%Since $\mathbb{I} \neq H^{\infty}({\D}),$ we see that the ideals $(\phi_{f_1}, \phi_{f_1},...)$ generated by $\phi_{f_j}$'s is proper; hence $\underset{z \in \D} \inf \sum _{j=1}^{\infty} {\vert \phi_{f_j}(z)} \vert}=0.$ Thus there exists a sequence $(z_k)_{k\in \mathbb{N}$ in $\D$ so that $\phi_{f_j} (z_k) \rightarrow 0$ for every $j$. 
%Hence, 
%$$ \epsilon ^2 \leq \sum _{j=1}^{\infty} {\vert {f_j}(z)} \vert ^2 = \sum _{j=1}^{\infty} {\vert f_{c_j} +\phi_{f_j}(z)} \vert ^2 \rightarrow \sum _{j=1}^{\infty} {\vert{f_{c_j}} \vert ^2.$$

%Therefore, the corona condition gives $$ 1 \geq \sum _{j=1}^{\infty} {\vert{f_{c_j}} \vert ^2 \geq \epsilon ^2.$$\\

%So  $f_{c_j} \leq 1 \; \text{for all j}$ and also $F_c \neq 0.$

%Assume that $ \vert F_{c_1}\vert = {\underset{j} \max {\vert f_{c_j} \vert}}.$ Therefore, from the above relation with $\underline{e}_{1}=[1, 0, ...,]$ we get that 
 \begin{equation} \label{relation3}
h_c \frac{F_c^\ast}{ \Vert F_c \Vert^2} + \left( \phi_h -  G(z)^T \phi_F (z) \right) \frac{F_c^\ast}{ \Vert F_c \Vert^2} = G(z)^T   + Q_{F(z)}Q_{G(z)}^T \frac{F_c^ \ast }{\Vert F_{c}\Vert ^2} .
\end{equation}

Define $$V(z)^T:=G(z)^T+Q_{F(z)}Q_{G(z)}^T \frac{F_c^ \ast }{\Vert F_{c}\Vert ^2}$$
It's clear that $$F(z)V(z)^T=h(z),\;  \text{for all } \; z \in \D.$$
Since $G \in \left( H^{\infty}(\D) \right)_{l^2}$  and the elements of $\phi_F $  are in $\mathbb{I}$, the left side of the equation  (\ref{relation3}) shows that the  solution $V$ is in $ (H^{\infty}_{\mathbb{I}}(\D))_{l^2} $. 
%Thus we take $ X(z)^T = Q_{G{(z)}}^T\frac{F_c^{\ast}}{\|F_c\|^2} $. 

As in the corona theorem, for the norm estimate, we have that $\Vert V \Vert _{\infty} \leq  \left( { 1+ \frac{1}{\Vert F_c \Vert}} \right) \Vert G \Vert_{\infty} \leq  C_0 \left( { 1+ \frac{1}{\Vert F_c \Vert}} \right),$  where $C_0$ is the norm of the $H^{\infty}(\D)$ solution, $G$, obtained in \cite{T3}.

%\textit{Case II}: \underline{$F_c = 0$}. 
%Again, Since $\mathbb{I} \neq H^{\infty}({\D}),$ we see that the ideals $(\phi_{f_1}, \phi_{f_1},...)$ generated by $\phi_{f_j}$'s is proper; hence $\underset{z \in \D} \inf \sum _{j=1}^{\infty} {\vert \phi_{f_j}(z)} \vert}=0.$ Thus there exists a sequence $(z_k)_{k\in \mathbb{N}}$ in $\D$ so that $\phi_{f_j} (z_k) \rightarrow 0$ for every $j$. 
%Hence, 
%$$\vert h_c + \phi _h (z_k) \vert \leq \sum _{j=1}^{\infty} {\vert {f_j}(z_k)} \vert ^2 = \sum _{j=1}^{\infty} {\phi_{f_j}(z_k)} \vert ^2 \rightarrow 0.$$\\
%That means, $\vert h_c + \phi _h (z_k) \vert \rightarrow 0. $ 

%This implies two cases: either $\phi_h(z_k) \rightarrow 0,$ means $h_c=0$ \\
%or $\phi_h(z_k) \rightarrow -h_c$ \\
%\section{Comments}

\end{proof}
\begin{proof}[Proof of Corollary \ref{cr}] 
The proof of this corollary follows similarly as the proof of Theorem \ref{th2}  by using Wolff's Theorem  instead of Treil's Theorem. 
\end{proof}

\textit{Acknowledgement}: The author would like to thank the reviewer for the thorough and constructive review, which improved the over-all presentation of this paper significantly. Also, the author would like to thank T. Trent for his helpful comments.


\begin{thebibliography}{99}

\bibitem{carleson} L. Carleson, \emph{Interpolation by bounded analytic functions and the corona
problem}, Annals of Math. \textbf{76} (1962), 547-559.

\bibitem{cegrell1} U. Cegrell, \emph{A generalization of the corona theorem in the unit disc}, Math. Z. \textbf{203} (1990), 255-261

\bibitem{cegrel2l} \bysame, \emph{Generalizations of the corona theorem in the unit disc}, Proc. Royal
Irish Acad. \textbf{94} (1994), 25-30.


%\bibitem{druryarveson} S. Costea, E. Sawyer, and B. Wick, \emph{The Corona Theorem for the
%
%Drury-Arveson Hardy space and other holomorphic Besov-Sobelov spaces on the unit ball in $\C^{n}$}, Anal. PDE \textbf{4} (2011), 499-550

\bibitem{DPRS} K. R. Davidson, V. I. Paulsen, and M. Ragupathi, and D. Singh, \emph{ A constrained
Nevanlinna-Pick theorem}, Indiana Math. J. \textbf{58}  (2009), no.2, 709--732.

\bibitem{garnett} J. B. Garnett, \emph{Bounded Analytic Functions}, Academic Press, (2007)

\bibitem{GMN} P. Gorkin, R. Mortini, and A. Nicolau,  \emph{The generalized corona theorem},  Math. Annalen 
\textbf{301}  (1995), 135-154.


\bibitem{mortini} R. Mortini \emph{ Generating sets for Ideals of finite type in  $H^{\infty}$}, Bull. Sci. Math.  \textbf{136} (2012),  687 - 708.



\bibitem{MSW}  R. Mortini, A. Sasane, and B. Wick, \emph{The corona theorem and stable rank for $\C+BH^{\infty}(\D)$}, Houston J. Math. \textbf{36} (2010), no. 1, 289-302.

\bibitem{nikolski} N. K. Nikolski, \emph{Treatise on the Shift Operator}, Springer-Verlag, New York
(1985).

\bibitem{pathi} M. Ragupathi, \emph{Nevanlinna-Pick interpolation for $\C+BH^{\infty}(\D)$}, Integral Equa.
Oper. Theory \textbf{63} (2009), 103-125.

\bibitem{pau} J. Pau, \emph{On a generalized corona problem on the unit disc}, Proc. Amer. Math. Soc. \textbf{133} (2004) no. 1, 167-174.

\bibitem{rao} K. V. R. Rao, \emph{ On a generalized corona problem}, J. Analyse Math. \textbf{18} (1967), 277-278.

\bibitem{von}  M. V. Renteln, \emph{Finitely generated  ideals in the Banach algebra $H^{\infty}$ }, Collectanea Mathematica  \textbf{26} (1975), 3-14.

\bibitem{rosenblum} M. Rosenblum, \emph{A corona theorem for countably many functions}, Integral
Equa. Oper. Theory \textbf{3} (1980), no. 1, 125-137.

\bibitem{ryle1} J. Ryle, \emph{A corona theorem for certain subalgebras of $
H^{\infty}(\D) $}, Dissertation, The University of Alabama, (2009).

\bibitem{RT} J. Ryle and T. Trent, \emph{A corona theorem for certain subalgebras of
$H^{\infty}(\D)$}, Houston J. Math \textbf{37} (2011), no. 4, 1211-1226.
\bibitem{scheinberg} S. Scheinberg, \emph{Cluster sets and corona theorems in Banach spaces of
analytic functions}, Lecture Notes in Mathematics, Springer, New York, 1976

\bibitem{ryle2} J Ryle and T. Trent, \emph{A corona theorem for certain subalgebras of
$H^{\infty}(\D)$ II}, Houston J. Math \textbf{38} (2012), no. 4, 1277-1295.
\bibitem{scheinberg} S. Scheinberg, \emph{Cluster sets and corona theorems in Banach spaces of
analytic functions}, Lecture Notes in Mathematics, Springer, New York, 1976.

%\bibitem{sola} J. Solazzo, \emph{Interpolation and Computability}, Ph.D. Thesis, University of
%Houston, 2000.

\bibitem{tolokonnikov} V. A. Tolokonnikov, \emph{The corona theorem in algebras of smooth
functions}, Translations (American Mathematical Society), \textbf{149} (1991) no. 2, 61-95.

\bibitem{treil2} S. R. Treil, \emph{Estimates in the corona theorem and ideals of $ H^{\infty} $: A
problem of T. Wolff}, J. Anal. Math \textbf{87} (2002), 481-495

\bibitem{T3}\bysame, \emph{The problem of ideals of $H^{\infty}(\mathbb{D})$:
Beyond the exponent $\frac{3}{2}$,} J. Fun. Anal. \textbf{253} (2007), 220-240.

%\bibitem{operatortheory} T. Trent, \emph{Function theory problems and operator theory}, Proceedings
%of the Topology and Geometry Research Center, TGRC-KOSEF, Vol. 8, Dec. 1997.

\bibitem{estimate} T. Trent, \emph{An estimate for ideals in $ H^{\infty}(\D) $}, Integr. Equat.
Oper. Th. \textbf{53} (2005), 573-587.

\bibitem{Q} \bysame, \emph{An $H^2$ corona theorem on the bidisk for infinitely many functions}, Linear Alg. and App. \textbf{379} (2004), 213-227.

\bibitem{anote} \bysame, \emph{A note on multiplication algebras on reproducing kernel Hilbert
spaces}, Proc. Amer. Math. Soc. \textbf{136} (2008), 2835-2838.

\bibitem{u} A. Uchiyama, \emph{Corona theorems for countably many functions and estimates for their
solutions}, preprint, UCLA, 1980.

%Problem Book, by V. P. Havin, S. V. Hruscev, and N. K. Nikolski (eds.), Springer-Verlag, Berlin
%(1984).

\bibitem{wolff} T. Wolff, \emph{A refinement of the corona theorem}, in Linear and Complex Analysis
Problem Book, by V. P. Havin, S. V. Hruscev, and N. K. Nikolski (eds.), Springer-Verlag, Berlin
(1984).




\end{thebibliography}
\end{document}